\newcommand{\CM}{Cohen-Macaulay}
\newcommand{\wrt}{with respect to}
\newcommand{\n}{\mathfrak{n} }
\newcommand{\m}{\mathfrak{m} }
\newcommand{\rt}{\rightarrow}
\newcommand{\ov}{\overline}
\newcommand{\low}{\ell\ell}
\newcommand{\vt}{\vartheta }
\newcommand{\bx}{\mathbf{x}}
\newcommand{\reg}{\operatorname{reg}}
\newcommand{\depth}{\operatorname{depth}}
\newcommand{\projdim}{\operatorname{projdim}}
\newcommand{\ind}{\operatorname{index}}
\newcommand{\ord}{\operatorname{ord}}
\theoremstyle{plain}
\newtheorem{theorem}{Theorem}[section]
\newtheorem{lemma}[theorem]{Lemma}
\theoremstyle{definition}
\newtheorem{example}[theorem]{Example}
\theoremstyle{remark}
\begin{document}

\title[Lowey length]{On the Lowey length of modules of finite projective dimension.}
\author{Tony~J.~Puthenpurakal}
\date{\today}
\address{Department of Mathematics, IIT Bombay, Powai, Mumbai 400 076}

\email{tputhen@math.iitb.ac.in}
 \begin{abstract}
Let $(A,\m)$ be a local Gorenstein local ring and let $M$ be an $A$ module of finite length and finite projective dimension. We prove that the Lowey length of $M$ is greater than or equal to order of $A$. This generalizes a result of Avramov, Buchweitz, Iyengar and Miller \cite[1.1]{ABIM}.
\end{abstract}
 \maketitle
\section{introduction}
Let $(A,\m)$ be a local Gorenstein local ring  of dimension $d \geq 0$ and embedding dimension $c$. If $M$ is an $A$-module then we let $\lambda(M)$ denote its length. If $A$ is singular then the \emph{order} of $A$ is given by the formula
\[
\ord(A) = \min \big{\{}n\in \mathbb{N} \mid \lambda(A/\m^{n+1}) < \binom{n + c}{n} \big{ \}}.
\]
If $A$ is regular we set $\ord A = 1$. 
Note that if $A$ is singular then $\ord(A) \geq 2$.
Recall that \emph{Lowey length} of an $A$-module $M$ is defined to be the number
\[
\low(M) = \min \{ i \geq 0 \mid \m^i M = 0 \}.
\]
When $M$ is finitely generated $\low(M)$ is finite if and only if $\lambda(M)$ is finite.  Often the Lowey length of $M$ carries more structural information than does it length.

Let $G(A) = \bigoplus_{n\geq 0} \m^n/\m^{n+1}$ be the associated graded ring of $A$ and let $G(A)_+$ denotes its irrelevant maximal ideal. Let $H^i(G(A))$ be the $i^{th}$-local cohomology module of $G(A)$ with respect to $G(A)_+$. The \emph{Castelnuovo-Mumford regularity} of $G(A)$ is
\[
\reg G(A) = \max\{ i + j \mid H^i(G(A))_j \neq 0 \}
\] 
In the very nice paper \cite[1.1]{ABIM} the authors proved that if $G(A)$ is \CM \ then for each non-zero  finitely generated $A$-module $M$ of finite projective dimension
\[
\low(M) \geq \reg G(A) + 1 \geq \ord(A). 
\]
We should note that the real content of their result is that $\low(M) \geq \reg G(A)  + 1$. The fact that $\reg(G(A)) + 1 \geq \ord(A)$ is elementary, see \cite[1.6]{ABIM}. The hypotheses $G(A)$ is \CM \ is quite strong, for instance $G(A)$ need not be \CM \ even if $A$ is a complete intersection.  In this short paper we show
\begin{theorem}\label{main}
Let $(A,\m)$ be a Gorenstein local ring and let $M$ be a non-zero finitely generated module of finite projective dimension. Then
\[
\low(M) \geq  \ord(A). 
\]
\end{theorem}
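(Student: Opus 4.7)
The plan is to reduce to an Artinian quotient of $A$ over which $M$ becomes free, and then apply an elementary Hilbert function inequality. First I pass to the $\m$-adic completion and assume $A$ is complete with a minimal Cohen presentation $A = Q/I$, where $(Q,\m_Q)$ is regular local of dimension $c$. In this description, $n := \ord(A)$ equals the least $\m_Q$-adic order of a nonzero element of $I$; in particular $I \subseteq \m_Q^n$. If $\lambda(M) = \infty$ then $\low(M) = \infty$ and the inequality is trivial, so I assume $M$ has finite length. Then $\depth_A M = 0$ and the Auslander--Buchsbaum formula gives $\projdim_A M = d$.

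Next I construct the Artinian quotient. The ideal $\mathfrak{a} := \m^n \cap \ann_A(M)$ is $\m$-primary, as the intersection of two $\m$-primary ideals, and hence has grade $d$ in the Cohen--Macaulay ring $A$. Choose an $A$-regular sequence $\mathbf{x} = x_1,\dots,x_d$ inside $\mathfrak{a}$ and set $\bar{A} = A/(\mathbf{x})$. Since $\mathbf{x} \subseteq \m^n \subseteq \m^2$ the embedding dimension is preserved; lifting $\mathbf{x}$ to elements $\tilde{\mathbf{x}} \subseteq \m_Q^n$ yields a minimal Cohen presentation $\bar{A} = Q/(I + \tilde{\mathbf{x}} Q)$ whose defining ideal lies in $\m_Q^n$, so $\ord(\bar{A}) \geq n = \ord(A)$. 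Moreover $\bar{A}$ is Artinian Gorenstein.

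The key step is that $M$ is free over $\bar{A}$. Since $\mathbf{x}$ is an $A$-regular sequence annihilating $M$, the standard change-of-rings identity
\[
\projdim_A M \;=\; d \;+\; \projdim_{\bar{A}} M
\]
(valid whenever $\projdim_A M < \infty$) combined with $\projdim_A M = d$ forces $\projdim_{\bar{A}} M = 0$. As $M \neq 0$ and $\bar{A}$ is local, $M$ is a nonzero free $\bar{A}$-module, so $\low(M) = \low(\bar{A})$.

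Finally, I invoke the elementary observation that any Artinian local ring $(R,\n)$ of embedding dimension $e$ satisfies $\low(R) \geq \ord(R)$: if $s = \low(R)$, the bound $\dim_k(\n^i/\n^{i+1}) \leq \binom{e-1+i}{i}$ together with the hockey-stick identity gives
\[
\lambda(R/\n^{s+1}) = \lambda(R) \leq \binom{e+s-1}{s-1} < \binom{e+s}{s},
\]
whence $\ord(R) \leq s$. Applied to $\bar{A}$ this yields $\low(\bar{A}) \geq \ord(\bar{A})$, and assembling the inequalities gives $\low(M) = \low(\bar{A}) \geq \ord(\bar{A}) \geq \ord(A)$. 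The main obstacle is verifying the change-of-rings identity used in the third paragraph; the remaining steps are essentially formal.
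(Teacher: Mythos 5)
The reduction in your third paragraph is where the argument breaks down. The change-of-rings identity $\projdim_A M = d + \projdim_{\bar A} M$ is \emph{not} valid merely under the hypothesis $\projdim_A M < \infty$; the first change-of-rings theorem (applied $d$ times) runs in the opposite direction: \emph{if} $\projdim_{\bar A} M < \infty$, then $\projdim_A M = d + \projdim_{\bar A} M$. Finiteness of $\projdim_A M$ does not imply finiteness of $\projdim_{\bar A} M$ when $\bx$ merely lies in $\ann_A(M)$. A minimal counterexample to the identity as you state it: $A = k[[t]]$, $x = t^2$, $M = k$; then $\projdim_A M = 1$ while $\projdim_{A/(t^2)} k = \infty$. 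Your specific construction fares no better: take $A = k[[t]]$ and $M = A/(t^2) \oplus A/(t^3)$, so $\projdim_A M = 1$ and $\ann_A(M) = (t^3)$; your ideal $\mathfrak{a} = \m \cap \ann_A(M) = (t^3)$ forces $\bar A = k[t]/(t^3)$, over which the summand $A/(t^2)$ has infinite projective dimension and $M$ is certainly not free (its length is not divisible by $3$). A singular instance with the same failure is $A = k[[t^2,t^3]]$, $M = A/(t^2A) \oplus A/(t^7A)$. More fundamentally, freeness of $M$ over $\bar A = A/(\bx)$ would force $\ann_A(M) = (\bx)$, i.e.\ $\ann_A(M)$ would have to be generated by a regular sequence lying in $\m^n$; so no cleverer choice of $\bx$ inside $\m^n \cap \ann_A(M)$ can rescue the strategy for general $M$.

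The remaining steps are correct --- the identification of $\ord(A)$ with the initial order of the defining ideal in a minimal Cohen presentation, the inequality $\ord(\bar A) \geq \ord(A)$ for $(\bx) \subseteq \m^n$, and the Hilbert-function argument giving $\low(R) \geq \ord(R)$ for an Artinian local ring $R$ --- but together they only prove the theorem for the special modules that are free over a complete intersection quotient $A/(\bx)$ with $(\bx) \subseteq \m^n$. The paper's route is genuinely different: it bounds $\low(M) \geq \ind(A)$ via Auslander's delta invariant (using $\delta_A(M) = \mu(M)$ when $\projdim_A M < \infty$), and then proves $\ind(A) \geq \ord(A)$ by combining Ding's lemma with the colon-ideal invariant $\vt(A)$ attached to superficial sequences. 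Some homological input of this kind appears unavoidable; the purely numerical reduction you propose loses the finite projective dimension hypothesis at exactly the step where it must be used.
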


The proof of Theorem \ref{main} uses invariants of Gorenstein local rings defined by 
Auslander and studied by S. Ding. We also introduce a new invariant $\vt(A)$ which is useful in the case $G(A)$ is not \CM. 

We now describe the contents of this paper in brief. In section two we recall the notion of index of a local ring. In section three we introduce our invariant $\vt(A)$.  In section four we prove Theorem \ref{main}.

\textit{Acknowledgment:} I thank Prof L. L. Avramov for many discussions.
\section{The  index of a Gorenstein local ring}
Let $(A,\m)$ be a  Gorenstein local ring and let $M$ be a finitely generated $A$-module. Let $\mu(M)$ denote minimal number of generators of $M$. In this section we recall the definition of the delta invariant of $M$.  Finally we recall the definition of index of $A$. A good reference for this topic is \cite{LW}.

\s A maximal \CM \ approximation of $M$ is a short exact sequence
\[
0 \rt Y_M \rt X_M \xrightarrow{f} N \rt 0,
\]
where $X_M$ is a maximal \CM \ $A$-module and $\projdim Y_M < \infty$. If $f$ can only be factored through itself by way of an automorphism of $X_M$, then the approximation is said to be minimal. Any module has a minimal approximation and  minimal approximations are unique upto non-unique isomorphisms. Suppose now that $f$ is a minimal approximation. Let $X_M  = \overline{X_M}  \oplus F$ where $\ov{X_M}$ has no free summands and $F$ is free. Then $\delta_A(M)$ is defined to be the rank of $F$.

\s \emph{Alternate definitions of the delta invariant}\\
 It can be shown that $\delta_A(M)$ is  the smallest integer $n$ such that there is an epimorphism $X \oplus A^n \rt M$ with $X$ a maximal \CM  \ module with no free summands, see \cite[4.2]{SS}. This definition of delta is used by Ding \cite{D}.
 
 The stable CM-trace of 
$M$ is the submodule $\tau(M)$ of $M$ generated by the homomorphic images in $M$ of
all  MCM modules without a free summand. Then $\delta_A(M) = \mu(M/\tau(M))$, see \cite[4.8]{SS}. This is the definition of delta in \cite{ABIM}.

\s\label{prop} We collect some properties of the delta invariant that we need. Let $M$ and $N$ be finitely generated $A$-modules.

\begin{enumerate}
\item
If $N$ is an epimorphic image of $M$ then $\delta_A(M) \geq \delta_A(N)$.
\item
$\delta_A(M\oplus N) = \delta_A(M) + \delta_A(N)$.
\item
$\delta_A(M) \leq \mu(M)$. 
\item
If $\projdim M < \infty $ then $\delta_A(M) = \mu(M)$.
\item
Let $x \in \m$ be $A\oplus M$ regular. Set $B = A/(x)$. Then  $\delta_A(M) = \delta_B(M/xM)$.
\item
If $A$ is zero-dimensional Gorenstein local ring and $I$ is an ideal in $A$ then $\delta_A(A/I) \neq 0$ if and only if $I = 0$.
\item
If $A$ is not regular then $\delta_A(\m^s) = 0$ for all $s \geq 1$.
\item
$\delta_A(k) = 1$ if and only if $A$ is regular.
\item
$\delta_A(A/\m^n) \geq 1 $ for all $n \gg 0$.
\end{enumerate}
\emph{Proofs and references} 
For (1),(2),(4),(8),(9); see \cite[1.2]{ABIM}.
Notice (3) follows easily from the second definition of delta.  The assertion (5) is proved in \cite[5.1]{AD}. For (6) note that $A/I$ is maximal \CM. The assertion (7) is due to  Auslander. Unfortunately this paper of Auslander is unpublished. However there is an extension of the delta invariant to all Noetherian local rings due to Martsinkovsky \cite{M}. In a later paper  he proves that $\delta_A(\m) = 0$. see \cite[Theorem 6]{M-r}. We prove by induction that $\delta_A(\m^s) = 0$ for all $s \geq 1$. For $s= 1$ this is true. Assume for $s = j$. We prove it for $s = j+1$. Let $\m^{j+1} = < a_1b_1, a_2b_2,\ldots,a_mb_m>$ where $a_i \in \m^j$ and $b_i \in \m$. Let $I_i = a_i\m $ for $i = 1,\ldots,m$. Note $I_i \subseteq \m^{j+1}$ and the natural map
$\phi \colon \bigoplus_{i = 1}^{m}I_i \rt \m^{j+1}$ is surjective. By (1) and (2) it is enough to show that $\delta_A(I_i) = 0$ for all $i$. But this is clear as $I_i$ is a homomorphic image of $\m$.

\s The \emph{index} of $A$ is defined by Auslander to be the number
\[
\ind(A) = \min \{ n \mid \delta_A(A/\m^n) \geq 1 \}.
\]
It is a positive integer by \ref{prop}(9) and equals 1 if and only if $A$ is regular, see \ref{prop}(8).

\s\label{ineq} By \cite[1.3]{ABIM} we have that if $\projdim M$ is finite then 
\[
\low(M) \geq \ind(A).
\]

\section{The invariant $\vt(A)$}
Throughout this section $(A,\m)$ is a \CM \ local ring  of dimension $d$. We assume that $k$, the residue field of $A$, is  infinite. In this section we define an invariant $\vt(A)$. This will be useful when $G(A)$ is not \CM.

\s Let $a \in A$ be non-zero. Then there exists $n \geq 0$ such that $a \in \m^n \setminus \m^{n+1}$. Set $a^* = $ image of $a$ in $\m^n/\m^{n+1}$ and we consider it as a element in $G(A)$. Also set $0^* = 0$.  If $a \in \m$ is such that $a^*$ is $G(A)$-regular then $G(A/(a)) = G(A)/(a^*)$. 
\s Recall $x \in \m$ is said to be $A$-\textit{superficial} if there exists integer $c > 0$ such that
for $n \gg 0$ we have $(\m^{n+1}\colon x)\cap \m^c = \m^{n-1}$. Superficial elements exist if $d >0$ as $k$ is an infinite field. As $A$ is \CM, it is easily shown that
a superficial element is a non-zero divisor of $A$. Furthermore we have
\[
(\m^{n+1} \colon x) = \m^n \quad \text{for all} \ n \gg 0.
\]
This enables to define the following two invariants of $A$ and $x$:
\[
\vt(A,x) = \inf\{ n \mid (\m^{n+1} \colon x) \neq \m^n \},
\]
\[
\rho(A,x) = \sup\{n \mid (\m^{n+1} \colon x) \neq \m^n \}.
\]
\s \label{obs-1}
Notice $(\m^{n+1} \colon x) = \m^n $ for all $n \geq 0$ if and only if $x^*$ is $G(A)$-regular. Thus in this case $\vt(A,x) = + \infty$ and
$\rho(A,x) = -\infty$. 

If $\depth G(A) > 0$ then $x^*$ is $G(A)$-regular, see \cite[2.1]{HM}.  Thus in this case $\vt(A,x) = + \infty$ and
$\rho(A,x) = -\infty$. 

If $\depth G(A) = 0$ then $(\m^{n+1} \colon x) \neq \m^n $ for some $n$.
In this case we have $\vt(A,x), \rho(A,x)$  are finite numbers and clearly  $\vt(A,x) \leq \rho(A,x)$. By \cite[2.7 and 5.1]{P} we have 
\[
\rho(A,x) \leq \reg G(A) -1.
\]

\s A sequence $\bx = x_1,\ldots,x_r$ in $\m$ with $r \leq d$ is said to be an $A$-\textit{superficial sequence} if $x_i$ is $A/(x_1,\ldots,x_{i-1})$-superficial for $i = 1,\ldots,r$. As the residue field of $A$ is infinite, superficial sequences exist for all $r \leq d$. As $A$ is \CM \ it can be easily shown that superficial sequences 
are regular sequences. 

\s Let $\bx = x_1,\ldots,x_d$ be a maximal $A$-superficial sequence.  Set $A_0 = A$ 
and $A_i = A/(x_1,\ldots,x_i)$ for $i = 1,\ldots,d$. 
Define
\[
\vt(A,\bx) = \inf\{ \vt(A_i,x_{i+1}) \mid  0\leq i \leq d-1 \}.
\]
Note that $G(A)$ is \CM \ if and only if $x_1^*,\ldots,x_d^*$ is a $G(A)$-regular sequence, see \cite[2.1]{HM}. It follows from \ref{obs-1} that 
\[
\vt(A,\bx) =  + \infty \quad \text{if and only if} \ G(A) \ \text{is \CM.} 
\]
We have
\begin{lemma}\label{bound}(with hypotheses as above).
If $G(A)$ is not \CM  \ then $\vt(A,\bx)  \leq \reg G(A) -1$.
\end{lemma}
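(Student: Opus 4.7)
The plan is to reduce to the depth-zero case for the associated graded ring by modding out by a regular portion of the superficial sequence, and then apply the single-element bound supplied by observation \ref{obs-1}.

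Set $s = \depth G(A)$. Because $A$ is \CM \ of dimension $d$ while $G(A)$ is not \CM, we have $0 \leq s \leq d-1$. I would first prove by induction on $i$, for $0 \leq i \leq s$, the following three statements simultaneously: (a) $G(A_i) = G(A)/(x_1^*,\ldots,x_i^*)$, (b) $\depth G(A_i) = s-i$, and (c) $\reg G(A_i) \leq \reg G(A)$. For the inductive step, when $i \leq s$ we have $\depth G(A_{i-1}) = s - i + 1 \geq 1$, so by \cite[2.1]{HM} (the result already invoked in \ref{obs-1}) the superficial element $x_i$ has $x_i^*$ regular in $G(A_{i-1})$. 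This at once identifies $G(A_i) = G(A_{i-1})/(x_i^*)$, and the short exact sequence
\[
0 \rt G(A_{i-1})(-1) \xrightarrow{x_i^*} G(A_{i-1}) \rt G(A_i) \rt 0
\]
gives via the long exact sequence of local cohomology with respect to $G(A_{i-1})_+$ both the depth drop and the inequality $\reg G(A_i) \leq \reg G(A_{i-1})$.

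After $s$ such steps, $A_s$ satisfies $\depth G(A_s) = 0$ and $\reg G(A_s) \leq \reg G(A)$. Since $s \leq d-1$, the next element $x_{s+1}$ of $\bx$ is an $A_s$-superficial element, so the depth-zero half of observation \ref{obs-1} applies to the pair $(A_s, x_{s+1})$ and yields
\[
\vt(A_s, x_{s+1}) \leq \rho(A_s, x_{s+1}) \leq \reg G(A_s) - 1 \leq \reg G(A) - 1.
\]
Since $\vt(A,\bx)$ is the infimum of $\vt(A_i,x_{i+1})$ over $0 \leq i \leq d-1$, we conclude $\vt(A,\bx) \leq \vt(A_s,x_{s+1}) \leq \reg G(A) - 1$, which is what we want.

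The main technical obstacle is the monotonicity $\reg G(A_i) \leq \reg G(A_{i-1})$ under passage to the quotient by a regular linear form. This is the standard behavior of Castelnuovo-Mumford regularity under modding out a regular linear element in a standard graded ring, and it follows directly from the long exact sequence in local cohomology above; nevertheless I would record this step carefully in our non-polynomial setting $G(A_{i-1})$. Everything else is bookkeeping built on the two preceding observations: that a depth drop under a regular superficial element gives $G(A_i) = G(A_{i-1})/(x_i^*)$, and that the correct base case for invoking \ref{obs-1} is reached precisely when the depth of the associated graded ring has been driven to zero.
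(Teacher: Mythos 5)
Your proposal is correct and follows essentially the same route as the paper: reduce to $A_s$ with $\depth G(A_s)=0$ via \cite[2.1]{HM}, apply \ref{obs-1} to the pair $(A_s,x_{s+1})$, and use that regularity does not increase when modding out a regular linear form. The only difference is presentational — you carry out the reduction one superficial element at a time with an explicit induction and justify the regularity monotonicity via the local cohomology long exact sequence, whereas the paper invokes these facts in a single step.
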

\begin{proof}
Suppose $\depth G(A) = i < d$. Then  $x_1^*,\ldots,x_i^*$ is $G(A)$-regular, see \cite[2.1]{HM}. Furthermore
$G(A_i)= G(A)/(x_1^*,\ldots,x_i^*)$. Thus
$\depth G(A_i) = 0$. (Note the case $i = 0$ is also included). 

By \ref{obs-1} we have that $\vt(A_i,x_{i+1}) \leq \reg(G(A_i)) -1$. It remains to note that as $x_1^*,\ldots,x_i^*$ is a  regular sequence  of elements of degree $1$ in $G(A)$, we have
 $\reg G(A_i)\leq \reg G(A)$. 
\end{proof}

\s We define
\[
\vt(A) = \sup \{ \vt(A,\bx) \mid \bx \ \text{is a maximal superficial sequence in } \ A \}.
\]
Note that if $G(A)$ is not \CM \ then  $\vt(A) \leq \reg G(A) - 1$, see \ref{bound}. If $G(A)$ is \CM \ then $\vt(A) = + \infty$.

\s Let $A$ be a singular ring and Let $x \in \m$ be an $A$-superficial element.  Let $t = \ord(A)$. The following fact is well-known
(for instance see \cite[p.\ 295]{RV}) 
\[
(\m^{i+1} \colon x) =  \m^i \quad \text{for} \ i = 0,\ldots, t-1.
\]
It follows that $\vt(A,x) \geq \ord(A)$ for any superficial element $x$ of $A$. 

Notice $\ord(A /(x)) \geq \ord(A)$ for any  superficial element $x$ of $A$ (for instance see \cite[p.\ 296]{RV}). Thus if $\bx = x_1,\ldots,x_d$ is a maximal $A$-superficial sequence we have that $\vt(A_i,x_{i+1}) \geq \ord(A_i) \geq \ord(A)$ for all $i = 0,\ldots,d-1$. It follows that
\begin{equation}\label{eqn}
\vt(A,\bx) \geq \ord(A).
\end{equation}

 It is possible that  for some rings strict inequality in \ref{eqn} can hold.
 \begin{example}
 Let $(A,\m)$ be an one dimensional stretched Gorenstein local ring, i.e., there exists an $A$-superficial element $x$ such that if $\n$ is the maximal ideal of $B = A/(x)$ then $\n^2$ is principal. For such rings  $\ord(B) = 2$, see \cite[1.2]{Sa}. So $\ord(A) = 2$. However  for stretched  Gorenstein rings $(\m^3 \colon x) = \m^2$; see \cite[2.5]{Sa}. (Note $(\m^{i+1} \colon x) = \m^i$ for $i \leq 1$ for any \CM \ ring $A$). Thus $\vt(A,x) \geq 3$. 
 
See \cite[Example 3]{Sa} for an example of a stretched one dimensional stretched Gorenstein local ring $A$ with $G(A)$ not \CM.
 \end{example} 
The following result is crucial in the proof of our main result. By $e(A)$ we denote the multiplicity of $A$ \wrt \ $\m$.
\begin{lemma}\label{crucial}
Let $(A,\m)$ be a $d$-dimensional \CM \ local ring with infinite residue field. Let $\bx = x_1,\ldots,x_d$ be a maximal superficial sequence. Assume $G(A)$ is not \CM. Let $n \leq \vt(A,\bx)$. Then
\[
\m^n \nsubseteq (\bx).
\]
\end{lemma}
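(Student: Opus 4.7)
The plan is to argue by contradiction: assume $\m^n \subseteq (\bx)$ and deduce that $G(A)$ must be \CM, contradicting the hypothesis. The argument proceeds by induction on $d$.

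For the base case $d=1$, the inequality $n \le \vt(A,x_1)$ gives $(\m^{j+1}\colon x_1)=\m^j$ for every $j<n$; in particular $(x_1)\cap \m^n = x_1\m^{n-1}$, and then $\m^n\subseteq (x_1)$ forces $\m^n = x_1\m^{n-1}$. Iteratively multiplying by $\m$ yields $\m^{j+1} = x_1\m^j$ for every $j \ge n-1$, and since $x_1$ is a non-zerodivisor this extends $(\m^{j+1}\colon x_1)=\m^j$ to all $j \ge 0$. Hence $x_1^*$ is $G(A)$-regular, so $\depth G(A) \ge 1 = \dim A$ and $G(A)$ is \CM, contradiction.

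For $d \ge 2$, I first establish a Valabrega--Valla-type identity in low degrees by induction on $r$: using $n \le \vt(A_{r-1},x_r)$ at each step, one shows
\[
(x_1,\ldots,x_r)\cap \m^{j+1} = \sum_{i\le r} x_i\m^j\qquad \text{for all } r\le d,\ j<n.
\]
Taking $r=d$, $j=n-1$ gives $\m^n = (\bx)\m^{n-1}$, whence $\m^{j+1} = (\bx)\m^j$ for every $j \ge n-1$. Next, I split on whether $G(A_1)$ is \CM. If $G(A_1)$ is not \CM, the assumption descends to $\m_1^n \subseteq (x_2,\ldots,x_d)A_1$ with $n \le \vt(A_1,x_2,\ldots,x_d)$, and the inductive hypothesis in dimension $d-1$ gives a contradiction. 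If $G(A_1)$ is \CM, then $\depth G(A)=0$ (since otherwise $x_1^*$ would be $G(A)$-regular, making $G(A_1)=G(A)/(x_1^*)$ and forcing $G(A)$ to be \CM), so I aim to show that $x_1^*$ is $G(A)$-regular, contradicting $\depth G(A)=0$. To prove $(\m^{j+1}\colon x_1)=\m^j$ for every $j \ge 0$ I induct on $j$: for $j \ge n$, given $a$ with $x_1 a \in \m^{j+1}=(\bx)\m^j$, write $x_1 a = \sum x_i c_i$ with $c_i \in \m^j$; passing to $A_1$ gives $\sum_{i>1}\bar x_i\bar c_i = 0$ (with $\bar c_i \in \m_1^j$), and since $\bar x_2^*,\ldots,\bar x_d^*$ is a $G(A_1)$-regular sequence, I can write each $\bar c_i$ as a Koszul combination with coefficients in $\m_1^{j-1}$. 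Lifting back to $A$ and using the Koszul cancellation
\[
\sum_{i>1} x_i\Bigl(\sum_{1<s<i}\lambda_{si}x_s - \sum_{i<t\le d}\lambda_{it}x_t\Bigr)=0
\]
reduces the problem to $a = c_1 + \sum_{i>1} x_i \mu_i$ with $x_1 \mu_i \in \m^j$; the inductive hypothesis $(\m^j\colon x_1)=\m^{j-1}$ then forces $\mu_i \in \m^{j-1}$, and hence $a \in \m^j$.

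The main technical obstacle is this last ``degree-preserving'' Koszul lift: to conclude $\lambda_{si}\in \m^{j-1}$ one needs the Koszul complex on $\bar x_2^*,\ldots,\bar x_d^*$ to be exact in every graded piece of $G(A_1)$, which is precisely what the \CM \ hypothesis on $G(A_1)$ provides, via Valabrega--Valla inside $A_1$. Executing this lift while tracking how the extra ``$(x_1)$-components'' of each lift are absorbed into the outer induction on $j$ is the crux of the argument.
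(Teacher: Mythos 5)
Your strategy is sound and your base case $d=1$ is correct (and in fact cleaner than the paper's: you get $\m^n=x_1\m^{n-1}$ directly from $\m^n\subseteq(x_1)$ and $(\m^n\colon x_1)=\m^{n-1}$, where the paper uses a multiplicity computation, $e(A)=\lambda(A/(x_1))$ together with $\lambda(\m^i/\m^{i+1})=e(A)-\lambda(\m^{i+1}/x_1\m^i)$, to reach the same identity). But your route in dimension $d\geq 2$ is genuinely different from, and much heavier than, the paper's. The paper does not induct on $d$ at all: it passes in one step to the one-dimensional quotient $A_{d-1}=A/(x_1,\ldots,x_{d-1})$, where $\m^n\subseteq(\bx)$ becomes $\n^n\subseteq(x_d)$ and $n\leq\vt(A_{d-1},x_d)$ still holds; the base-case argument then shows $x_d^*$ is $G(A_{d-1})$-regular, and a single citation of Sally descent \cite[Theorem 8]{Pf} lifts Cohen--Macaulayness of $G(A_{d-1})$ back up to $G(A)$. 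Your Case 2 (where $G(A_1)$ is \CM\ but $G(A)$ is not) is precisely the situation Sally descent handles, and what you propose there --- proving $(\m^{j+1}\colon x_1)=\m^j$ for all $j$ by an induction on $j$ driven by $\m^{j+1}=(\bx)\m^j$ and degree-preserving Koszul syzygies over $G(A_1)$ --- amounts to re-proving the needed instance of that theorem by hand. That is legitimate, but the two ingredients you lean on are only sketched: the low-degree Valabrega--Valla identity $(x_1,\ldots,x_r)\cap\m^{j+1}=\sum_{i\leq r}x_i\m^j$ for $j<n$ (true, and provable by the colon-ideal induction you indicate, but it does need the careful bookkeeping between $\m^j$ and $\m_i^j$), and above all the ``degree-preserving Koszul lift,'' which you yourself flag as the crux and do not prove. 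That lift is a real theorem (filtered acyclicity of the Koszul complex on $\bar x_2,\ldots,\bar x_d$ when $\bar x_2^*,\ldots,\bar x_d^*$ is $G(A_1)$-regular), so there is no error, but as written the proof is incomplete exactly at its hardest point. The simplest repair is also the most instructive comparison: skip the case division entirely, descend to $A_{d-1}$ as in your Case 1 iterated $d-1$ times (or in one step, as the paper does), apply your $d=1$ argument there, and quote Sally descent; this eliminates both unproven lemmas.
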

\begin{proof}
Suppose if possible $\m^n \subseteq (\bx)$ for some $n \leq \vt(A,\bx)$. Set $A_d = A/(\bx)$ and
$A_{d-1} = A/(x_1,\ldots,x_{d-1})$.  Let $\n$ be the maximal ideal of $A_{d-1}$. Notice $n \leq \vt(A_{d-1},x_d)$.

 We have an exact sequence
\[
0 \rt \frac{(\n^{n} \colon x)}{\n^{n-1}} \rt \frac{A_{d-1}}{\n^{n-1}} \xrightarrow{\alpha} \frac{A_{d-1}}{\n^n} \rt \frac{A_{d-1}}{(\n^n, x_d)} \rt 0.
\]
Here $\alpha( a + \n^{n-1}) = ax_d + \n^{n}$. Note that as $\m^n \subseteq (\bx)$ we have $A_{d-1}/(\n^n, x_d) = A_d$. Recall $(\n^{i+1} \colon x_d) = \n^i$ for all $i < \vt(A_{d-1},x_d)$. In particular we have $(\n^{n} \colon x) = \n^{n-1}$.
Thus we have 
\[
\lambda(\n^{n-1}/\n^n) = \lambda(A_d).
\]
Notice $e(A) = e(A_{d-1}) = e(A_d) = \lambda(A_d)$, cf., \cite[Corollary 11]{Pf}.
Furthermore for all $i \geq 0$ we have 
\[
\lambda(\n^i/\n^{i+1}) = e(A_{d-1}) - \lambda(\n^{i+1}/x\n^i), \quad \text{cf., 
\cite[Proposition 13 ]{Pf}}.
\]
For $i = n-1$ our result implies $\n^n = x_d\n^{n-1}$. It follows that $\n^j = x_d\n^{j-1}$ for all $j \geq n$. In particular we have $(\n^j \colon x_d) = \n^{j-1}$ for all $j \geq n$. As $n \leq \vt(A_{d-1},x_d)$ we have that $(\n^j \colon x_d) = \n^{j-1}$ for all $j \leq n$. It follows that $x_{d}^*$ is $G(A_{d-1})$-regular. So $\depth G(A_{d-1}) = 1$. By Sally descent, see \cite[Theorem 8]{Pf} we get $G(A)$ is \CM. This is a contradiction.
\end{proof}

\section{Proof of Theorem \ref{main}}
In this section we prove our main Theorem. We will use the invariant $\vt(A)$ which is defined only when the residue field of $A$ is infinite. We first show that to prove our result we can assume that the residue field of $A$ is infinite.

\s\label{infinite}
Suppose the residue field of $A$ is finite. Consider the flat extension $B = A[X]_{\m A[X]}$. Note that $\n = \m B$ is the maximal ideal of $B$ and $B/\n = k(X)$ is an infinite field. Let $M$ be a finitely generated $A$-module. The following facts can be easily proved:
\begin{enumerate}
\item
$\lambda_B(M \otimes_A B) = \lambda_A(B)$.
\item
$\m^i\otimes B = \n^i$ for all $i \geq 1$.
\item
$\lambda_B(B/\n^{i+1}) = \lambda_A(A/\m^{i+1})$ for all $i \geq 0$.
\item
$\ord(B) = \ord(A)$.
\item
$\projdim_A M = \projdim_B M\otimes_A B$.
\item
$\m^i M = 0$  if and only if $\n^i(M \otimes_A B) = 0$.
\item
$\low_A(M) = \low_B(M\otimes_A B)$.
\end{enumerate}

We need the following result due to Ding, see \cite[2.2,2.3,1.5]{D}.
\begin{lemma}\label{ding}
Let $(A,\m)$ be a Noetherian local ring and $s$ an integer. Suppose that $x \in \m \setminus \m^2$ is $A$-regular and the induced map $\ov{x} \colon \m^{i-1}/\m^{i} \rt \m^{i}/\m^{i+1}$ is injective for $1\leq i \leq s$. Then
\begin{enumerate}[\rm(1)]
\item
$A/\m^s$ is an epimorphic image of $(\m^s,x)$.
\item 
There is an $A$-module decomposition
\[
\frac{(\m^s,x)}{x(\m^s,x)} \cong \frac{A}{(\m^s,x)} \oplus \frac{(\m^s, x)}{(x)}.
\]
\end{enumerate}
\end{lemma}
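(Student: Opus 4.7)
The plan is to extract both conclusions from a single colon identity that the hypothesis makes available, namely
\[
(\m^i : x) = \m^{i-1}, \quad 1 \le i \le s.
\]
Call this identity $(\dagger)$. The inclusion $\m^{i-1} \subseteq (\m^i : x)$ is automatic from $x \in \m$. For the reverse, if $ax \in \m^i$ one first rules out $a \notin \m$ (which would force $x \in \m^i \subseteq \m^2$, contradicting $x \in \m \setminus \m^2$); and if $a \in \m^j \setminus \m^{j+1}$ with $j < i - 1$, the injectivity of $\bar x \colon \m^j/\m^{j+1} \to \m^{j+1}/\m^{j+2}$ (valid since $j + 1 \le s$) is contradicted, because $\bar a \ne 0$ while $\overline{ax} = 0 \in \m^{j+1}/\m^{j+2}$ as $ax \in \m^i \subseteq \m^{j+2}$. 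Iterating yields $a \in \m^{i-1}$.

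For part (2), I would analyze the short exact sequence
\[
0 \to (x)/x(\m^s, x) \to (\m^s, x)/x(\m^s, x) \to (\m^s, x)/(x) \to 0.
\]
Because $x$ is a non-zero-divisor, the isomorphism $A \xrightarrow{\cdot x} (x)$ pulls $x(\m^s, x)$ back to $(\m^s, x)$ and identifies the left term with $A/(\m^s, x)$; $(\dagger)$ identifies the right term $(\m^s, x)/(x) = \m^s/(\m^s \cap (x))$ with $\m^s/x\m^{s-1}$. To produce a splitting I would choose a $k$-vector-space complement $V$ of $\bar x \cdot G(A)_{s-1}$ inside $G(A)_s = \m^s/\m^{s+1}$, lift a basis of $V$ to elements $e_1, \ldots, e_\ell \in \m^s$, and set $I = (e_1, \ldots, e_\ell)$. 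A Nakayama argument applied to $\m^s/x\m^{s-1}$ gives $I + x\m^{s-1} = \m^s$, and the graded transversality of $V$ with $\bar x G(A)_{s-1}$ forces the colon bound $(I : x) \subseteq \m^s$: indeed, if $r \in (I : x)$ then $rx \in I \subseteq \m^s$, so $r \in \m^{s-1}$ by $(\dagger)$; if in addition $r \notin \m^s$, the injectivity of $\bar x$ at step $s$ yields $\overline{rx} \ne 0$ in $G(A)_s$, but then $\overline{rx}$ lies in $\bar I \cap \bar x G(A)_{s-1} = V \cap \bar x G(A)_{s-1} = 0$, a contradiction. This colon bound in turn implies $I \cap x(\m^s, x) = I \cap x\m^{s-1}$, so the inclusion $I \hookrightarrow (\m^s, x)$ descends to an injection $\m^s/x\m^{s-1} \hookrightarrow (\m^s, x)/x(\m^s, x)$ which is a section of the sequence above.

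For part (1), the same choice of $I$ gives the surjection directly: every $y \in (\m^s, x) = (x) + I$ admits a decomposition $y = ax + b$ with $a \in A$ and $b \in I$, and the map $\phi(y) = \bar a \in A/\m^s$ is well-defined precisely because $(I : x) \subseteq \m^s$; it is surjective since $\phi(x) = \bar 1$. The main obstacle is this transversal choice of lifts $e_i$: the naive decomposition $y = ax + m$ with $m \in \m^s$ (i.e.\ taking $I = \m^s$) is only well-defined modulo $(\m^s : x) = \m^{s-1}$, and one really needs the graded transversality $\bar I \cap \bar x G(A)_{s-1} = 0$ in $G(A)_s$ to upgrade well-definedness from $A/\m^{s-1}$ to $A/\m^s$.
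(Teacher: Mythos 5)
Your proof is correct, and there is nothing in the paper to compare it against: the author does not prove this lemma but quotes it from Ding's paper (items 2.2, 2.3, 1.5 there), so you have supplied a self-contained argument for a cited result. Both pillars of your argument hold up. The colon identity $(\m^i : x) = \m^{i-1}$ for $1 \le i \le s$ is the right reformulation of the injectivity hypothesis together with $x \notin \m^2$, and your derivation of it is sound. More importantly, you correctly isolate and resolve the genuine difficulty: the naive assignment $ax + m \mapsto \ov{a}$ with $m \in \m^s$ is only well defined modulo $(\m^s : x) = \m^{s-1}$, and replacing $\m^s$ by the ideal $I$ generated by lifts of a basis of a complement $V$ of $x^* G(A)_{s-1}$ in $G(A)_s$ is exactly the needed fix. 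Your verification of $(I : x) \subseteq \m^s$ --- first landing in $\m^{s-1}$ via the colon identity, then pushing into $\m^s$ via injectivity of $\ov{x}$ in degree $s$ together with $V \cap x^* G(A)_{s-1} = 0$ --- is correct; the one point worth making explicit is that the image of $I$ in $G(A)_s$ really equals $V$ because $\m I \subseteq \m^{s+1}$, and your Nakayama step giving $I + x\m^{s-1} = \m^s$ is likewise fine. This single colon bound then does double duty: it makes $\phi(ax+b) = \ov{a}$ a well-defined surjection onto $A/\m^s$ in part (1), and it forces $I \cap x(\m^s,x) = I \cap x\m^{s-1}$ (both sides equal $I \cap x\m^s$), which is precisely what makes $\m^s/x\m^{s-1} \cong I/(I \cap x\m^{s-1}) \to (\m^s,x)/x(\m^s,x)$ a well-defined section in part (2). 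Combined with the identifications $(x)/x(\m^s,x) \cong A/(\m^s,x)$ (from $x$ being a non-zero-divisor) and $(\m^s,x)/(x) \cong \m^s/x\m^{s-1}$ (from $\m^s \cap (x) = x\m^{s-1}$, again by the colon identity), the exact sequence splits and the stated decomposition follows.
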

We now give
\begin{proof}[Proof of Theorem \ref{main}]
By \ref{infinite} we may assume that the residue field of $A$ is infinite. If $G(A)$ is \CM \ then the result holds by Theorem 1.1 in \cite{ABIM}. So assume that $G(A)$ is not \CM. We prove $\ind(A) \geq \vt(A)$. By \ref{eqn} and \ref{ineq} this will imply the result.

Let $\bx = x_1,\ldots,x_d$ be an $A$-superficial sequence with $\vt(A) = \vt(A,\bx)$. Suppose if possible $\ind(A) < \vt(A,\bx)$. Say $\delta_A(A/\m^s) \geq 1$ for some $s < \vt(A,\bx)$. Set $A_0 = A$ and $A_i = A/(x_1,\ldots,x_i)$. Let $\m_i$ be the maximal ideal of $A_i$. We prove by descending induction that 
$$\delta_{A_i}(A_i/\m^s_i) \geq 1 \quad \text{for all} \ i.$$
For $i = 0$ this is our assumption. Now assume this is true for $i$. We prove it for $i+1$.  We first note that $s < \vt(A,\bx) \leq \vt(A_i,x_{i+1})$. Therefore
$(\m_i^{j+1} \colon x_{i+1}) = \m^j_i$ for all $j \leq s$. So 
by \ref{ding} we have that
$A_i/\m_i^s$ is an epimorphic image of $(\m_i^s,x)$. Thus $\delta_{A_i}((\m_i^s,x)) \geq 1$. We also have an $A_i$-module decomposition
\begin{equation*}
\frac{(\m_i^s,x_{i+1})}{x_{i+1}(\m^s_i,x_{i+1})} \cong \frac{A_i}{(\m^s_i,x_{i+1})} \oplus \frac{(\m^s_i, x_{i+1})}{(x_{i+1})}. \tag{$\dagger$}
\end{equation*}
By \ref{prop}(5) we have that 
\[
 \delta_{A_{i+1}}\left( \frac{(\m_i^s,x_{i+1})}{x_{i+1}(\m^s_i,x_{i+1})}\right) = \delta_{A_i}((\m_i^s,x)) \geq 1.
\]
Also note that
\[
\delta_{A_{i+1}}\left( \frac{(\m^s_i, x_{i+1})}{(x_{i+1})}\right) = \delta_{A_{i+1}}(\m_{i+1}^s) = 0, \ \text{by \ref{prop}(7)}.
\]
By $(\dagger)$ and 2.3(2) it follows that
\[
1 \leq \delta_{A_{i+1}}\left(\frac{A_i}{(\m^s_i,x_{i+1})} \right) = \delta_{A_{i+1}}\left( \frac{A_{i+1}}{\m^s_{i+1}} \right).
\]
This proves our inductive step. So we have $\delta_{A_d}(A_d/\m_d^s) \geq 1$. By \ref{prop}(6) we have that $\m_d^s = 0$. It follows that $\m^s \subseteq (\bx)$. This contradicts \ref{crucial}.
\end{proof}

\end{document}